\documentclass[12pt]{article}
\usepackage{amssymb,url, amsthm, amsmath, verbatim}

\usepackage{amssymb}

\newtheorem{Th}{Theorem}
\newtheorem{lem}{Lemma}
\newtheorem{defe}{Definition}
\newtheorem{rem}{Remark}

\newcommand{\eps}{\varepsilon}
\author{F.~V.~Petrov, D.~M.~Stolyarov, P.~B.~Zatitskiy}

\title{On embeddings of finite metric spaces in $l_\infty^n$}

\begin{document}

\maketitle

\begin{abstract}
We prove that for any given integer $c>0$ any metric space on $n$ points may be isometrically
embedded into $l_{\infty}^{n-c}$ provided $n$ is large enough.
\end{abstract}

Let $(X,\rho)$ be a metric space on $n$ points. Denote by $m(X)$ the minimal $k$ such that
$X$ may be isometrically embedded in $l_{\infty}^k$ and by $m(n)$ the maximum value of $m(X)$
for all metric spaces $X$ on $n$ points. Denote also $\alpha(X)=n-m(X)$, $\alpha(n)=n-m(n)$.
It is well known that $m(n)\leq n-1$. For example, one may fix a point
$x_0\in X$ and realize the $l_{\infty}^{n-1}$ as a vector space of functions on $X$, which vanish
in $x_0$, endorsed with max-norm. Then the map $x\rightarrow \rho(x,\cdot)-\rho(x_0,\cdot)$ defines
isometric embedding of $X$ into this space.  It is proved by D. Wolfe that if $n\geq 4$, then
$m(n)\leq n-2$ \cite{W}. Using Ramsey-type graphs with $n$ vertices
without 4-cycles and $k$-anticliques K. Ball has shown \cite{B} that $m(n)\geq n-k$. He refered to
Alon's \cite{A} explicit construction of such graphs with $k=O(n^{3/4})$, while the Spencer's combinatorial
argument \cite{S} allows us to get $k=O(n^{2/3}\cdot \log n)$, which gives $\alpha(n)=O(n^{2/3}\ln n)$.

Our main result is the following

\begin{Th}
$\lim \alpha(n)=+\infty$, i.e. that given $c>0$, $m(n)\leq n-c$ for large enough $n$.
\end{Th}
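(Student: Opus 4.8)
The plan is to prove the following sharper statement, which immediately yields the theorem: \emph{inside every metric space on $n$ points there is a subset $Y$ with $|Y|\to\infty$ and $\alpha(Y)\to\infty$ as $n\to\infty$.} Indeed, I will first show that $\alpha$ cannot decrease under passing to subsets, so that $\alpha(X)\ge\alpha(Y)$; combined with such a $Y$ this gives $\alpha(X)\to\infty$ uniformly over all $X$ on $n$ points, i.e. $m(n)\le n-c$ for $n$ large. It is convenient to view the problem as a covering one: $X$ embeds in $l_\infty^k$ exactly when there are $k$ functions $f_1,\dots,f_k$, each $1$-Lipschitz on $(X,\rho)$, such that every pair $\{x,y\}$ is \emph{tight} for some $f_i$, meaning $|f_i(x)-f_i(y)|=\rho(x,y)$. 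The trivial embedding uses the $n-1$ distance functions $\rho(\cdot,z)$, $z\ne x_0$.

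The monotonicity lemma is the backbone: if $Y\subseteq X$ then $m(X)\le m(Y)+(|X|-|Y|)$, hence $\alpha(X)\ge\alpha(Y)$. To see it, embed $Y$ isometrically in $l_\infty^{m(Y)}$, extend each of its $m(Y)$ coordinate functions from $Y$ to all of $X$ as a $1$-Lipschitz function (McShane's extension $\tilde f(x)=\min_{y\in Y}(f(y)+\rho(x,y))$), and adjoin the $|X|-|Y|$ functions $\rho(\cdot,p)$, one for each $p\in X\setminus Y$. The extended $Y$-coordinates are $1$-Lipschitz on $X$, so they never exceed $\rho$, while each added function $\rho(\cdot,p)$ is tight on every pair containing $p$; one checks directly that for every pair the maximum over all coordinates equals $\rho$. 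Thus the whole map is isometric, proving the inequality.

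It therefore suffices to locate, in an arbitrary $X$, a large subset on which the metric is so structured that it compresses by an unbounded amount. This I obtain by a Ramsey argument. Colour the pairs of $X$ by their distance and apply the canonical Ramsey theorem of Erd\H{o}s and Rado (to cope with arbitrarily many distance values); then refine by colouring triples according to their betweenness type (which vertex, if any, is metrically between the other two, i.e. realises equality in the triangle inequality) and the order type of their three distances, and apply the ordinary Ramsey theorem for triples. After finitely many refinements one reaches a subset $Y$, whose size still tends to infinity with $n$, on which the induced metric is \emph{homogeneous}: all pairs are equidistant (equilateral), or the distances follow an ultrametric/tree pattern, or every triple is collinear (a line), or all triangles are strict with one fixed distance-order type.

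For the extreme homogeneous types compression is explicit: a line embeds isometrically in $l_\infty^1$, and an equilateral space (more generally a tree-like/ultrametric one) on $t$ points embeds in $l_\infty^{O(\log t)}$ by a binary-codeword construction — separate the top clusters by placing them at distinct $\{0,D\}$-codewords, which puts them pairwise at the maximal distance $D$, and recurse at smaller scales, reusing coordinates across clusters since inter-cluster distances are already realised. In all these cases $\alpha(Y)\ge t-O(\log t)\to\infty$. The main obstacle is the remaining homogeneous regime, in which all triangles are strictly nondegenerate with a single fixed order type (the essentially \emph{rainbow}, non-equilateral, non-collinear case): here neither the equilateral nor the linear compression applies, and one must show directly that such a monotone homogeneous metric still embeds with an unbounded dimension saving. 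I expect this to be the technical heart of the proof, since it is exactly the regime adjacent to the two-valued graph metrics underlying the known lower bound; handling it should require either a dedicated embedding exploiting the fixed order type to reduce the metric to a near-linear one, or the same extremal/Ramsey-graph input that controls $\alpha(n)$ from below. Once this case is settled, $\alpha(Y)\to\infty$ in every case and the theorem follows.
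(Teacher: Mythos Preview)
Your proposal is not a proof: you explicitly stop short of the decisive case. After your Ramsey reduction, the ``homogeneous'' regime in which all triangles are strict with one fixed order type is not a residual technicality --- for a generic metric (distances linearly independent over $\mathbb{Q}$, to which one may reduce by a routine perturbation/compactness argument) every subset is already of this kind, so your canonical-Ramsey and triple-colouring steps buy essentially nothing. The equilateral, ultrametric, and collinear cases you dispatch are vacuous in the generic setting, and the sentence ``once this case is settled\dots'' is exactly the whole theorem.

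The paper closes this gap by colouring not triples but $4$-tuples, and not by distance order but by the order type of the three cross-sums $R_1=\rho(x_a,x_b)+\rho(x_c,x_d)$, $R_2=\rho(x_a,x_c)+\rho(x_b,x_d)$, $R_3=\rho(x_a,x_d)+\rho(x_b,x_c)$. The point is that the $1$-Lipschitz condition for the tree functions one wants to use (trees of diameter $\le 4$, alternately oriented) reduces precisely to inequalities of the form $\rho(a,d)+\rho(b,c)\ge \rho(a,b)+\rho(c,d)$ on paths $a\!-\!b\!-\!c\!-\!d$; a monochromatic $4$-tuple colouring pins down, uniformly, which of these inequalities hold, so that admissibility of a tree depends only on the combinatorics of its centre and main vertices, not on the actual distances. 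Two of the six colours are impossible on $5$ points, and for each of the remaining four colours the paper writes down explicit families of admissible trees covering the complete graph with a deficit tending to infinity (e.g.\ $2k$ points covered by $k$ trees in one colour, $4n+1$ points by $3n$ trees in another). Your triple-based invariants cannot see these $4$-point inequalities, which is why the ``technical heart'' you defer is inaccessible from your reduction; to complete your approach you would need to pass to the $4$-tuple colouring and then carry out the case analysis.
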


The idea is to use the well-known observation that if $Y\subset X$, then $m(X)\leq m(Y)+|Y\setminus X|$, hence
$\alpha(X)\geq \alpha(Y)$. Then we may try to find appropriate subset in $X$ using Ramsey theorem and work with this subset
instead $X$.

We start with the following technical

\begin{defe} A finite metric space $X$ is called \textsl{generic}, if the distances between its
points are linearly independent over $\mathbb{Q}$.
\end{defe}

and the standard

\begin{lem} If $m(X)\leq N$ for any generic metric space $X$ on $n$ points, then $m(n)\leq n$, i.e. the same inequality
holds for any metric space $X$ on $n$ points.
\end{lem}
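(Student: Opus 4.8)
The plan is to prove this by approximation: I will show that generic metrics are dense among all metrics on a fixed set of $n$ points, and then pass to a limit of the corresponding embeddings. Writing $X=\{x_1,\dots,x_n\}$ and recording a metric by its vector of distances $(\rho_{ij})_{1\le i<j\le n}\in\mathbb{R}^{\binom{n}{2}}$, the set of all metrics forms a closed convex cone $M$ cut out by nonnegativity and the triangle inequalities. (I read the conclusion as $m(n)\le N$; the hypothesis bounds $m(X)$ by $N$ for every generic $X$.)

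First I would establish density. The cone $M$ is full-dimensional: the uniform metric $\rho_{ij}\equiv 1$ satisfies every triangle inequality strictly, so it lies in the interior of $M$, and more generally adding a fixed positive constant to all distances of any metric lands strictly inside $M$. Hence the interior of $M$ is dense in $M$. On the other hand, a metric fails to be generic precisely when $\sum q_{ij}\rho_{ij}=0$ for some nonzero rational vector $(q_{ij})$; the non-generic set is thus a countable union of proper hyperplanes, which is meager and of measure zero. Removing it from the open interior of $M$ leaves a dense set, so generic metrics are dense in $M$.

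Next, given an arbitrary metric $\rho$ on $X$, I would pick generic metrics $\rho^{(k)}\to\rho$. By hypothesis each space $(X,\rho^{(k)})$ embeds isometrically in $l_\infty^N$ via some map $f^{(k)}\colon X\to\mathbb{R}^N$; translating, I may assume $f^{(k)}(x_1)=0$. Then $\|f^{(k)}(x_i)\|_\infty=\rho^{(k)}_{1i}\to\rho_{1i}$, so all the vectors $f^{(k)}(x_i)$ remain in a fixed bounded region of $\mathbb{R}^N$. The embeddings therefore range over a compact subset of $(\mathbb{R}^N)^n$, and after passing to a subsequence I get $f^{(k)}\to f$ for some $f\colon X\to\mathbb{R}^N$. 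Continuity of the max-norm gives $\|f(x_i)-f(x_j)\|_\infty=\lim_k\rho^{(k)}_{ij}=\rho_{ij}$, so $f$ is an isometric embedding of $(X,\rho)$ into $l_\infty^N$, whence $m(X)\le N$ for every metric $X$ on $n$ points.

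The main obstacle I expect is the density step rather than the limiting step: one must ensure the generic approximants are honest metrics (satisfying all triangle inequalities) even when the target $\rho$ is degenerate, i.e.\ lies on the boundary of $M$ with some tight triangle inequalities. This is exactly what the full-dimensionality of $M$ and the ``add a constant'' trick resolve --- they let me first push $\rho$ strictly into the interior of $M$ and only then perturb it generically. Once density is in hand, the compactness argument (pinning one point to the origin to remove the translation freedom) makes the passage to the limit routine.
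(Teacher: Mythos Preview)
Your proposal is correct and follows essentially the same route as the paper: approximate an arbitrary metric by generic ones, normalize the resulting embeddings by pinning one point to the origin, and extract a convergent subsequence whose limit is the desired isometry. The only cosmetic difference is in the density step---the paper perturbs each distance by an amount in $[\eps,2\eps]$ (which directly preserves the triangle inequality and allows a step-by-step choice avoiding countably many rational relations), while you phrase the same idea via the full-dimensionality of the metric cone and a Baire/measure-zero argument; your ``add a constant to enter the interior, then perturb'' is exactly the paper's trick in different clothing.
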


\begin{proof}
Fix arbitrary metric space $(X,\rho)$ on $n$ points. Our aim is to prove that $M(X)\leq N$. Fix $\eps>0$. Change a metric
on $X$ by adding to each distance some number from $[\eps,2\eps]$ so that the new metric $\rho_{\eps}$ is generic.
It is clearly possible: just change the distances step by step, and on each step you have only countably many forbidden
changes. Then $(X,\rho_{\eps})$ may be embedded isometrically in $l_{\infty}^{N}$, without loss of generality a point $x_0\in X$
maps to 0. Then let $\eps$ tend to 0, consider the convergent subsequence of embeddings and its limit is
isometric embedding of $(X,\rho)$ in $l_{\infty}^{n-1}$.
\end{proof}

Hereafter we consider only generic metric spaces.

Assume that $X$ is embedded in $l_{\infty}^N$. For any $i=1,\,2,\,\dots,\,N$ consider the value of $i$-th coordinate
as a function $\phi_i$ defined on $X$. Note that $\phi_i$'s are 1-Lipschitz functions (that is, $|\phi_i(x)-\phi_i(y)|\leq \rho(x,y)$),
and for any $x,y\in X$ there exists $i$ such that equality $|\phi_i(x)-\phi_i(y)|= \rho(x,y)$ holds. These two
conditions mean nothing but that our map is isometry on $X$. For any 1-Lipschitz function $f$ on $X$ define
its (oriented) graph, vertices of which are points of $X$, and edge $a\rightarrow b$ is drawn iff $f(a)-f(b)=\rho(a,b)$.
Then we realize that $m(X)$ is the minimal number of graphs of 1-Lipschitz functions on $X$, which cover all the edges of
complete (non-oriented) graph on $X$. Recall the following well-known

\begin{lem} If $Y\subset X$, then any 1-Lipschitz function $f$ on $Y$ may be extended to 1-Lipschitz function on $X$
\end{lem}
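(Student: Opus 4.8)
The plan is to write down the classical McShane extension explicitly and to verify its two defining properties directly from the triangle inequality. Since $X$ is finite, I may fix a nonempty $Y\subset X$ (the case $Y=\emptyset$ being vacuous) and set
$$F(x)=\min_{y\in Y}\bigl(f(y)+\rho(x,y)\bigr),$$
where the minimum is attained because $Y$ is finite. I claim that $F$ is the desired $1$-Lipschitz extension.

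First I would check that $F$ restricts to $f$ on $Y$. For $y_0\in Y$ the choice $y=y_0$ in the minimum gives $F(y_0)\le f(y_0)+\rho(y_0,y_0)=f(y_0)$. Conversely, for every $y\in Y$ the $1$-Lipschitz property of $f$ on $Y$ gives $f(y_0)-f(y)\le\rho(y_0,y)$, hence $f(y)+\rho(y_0,y)\ge f(y_0)$; taking the minimum over $y$ yields $F(y_0)\ge f(y_0)$. Thus $F(y_0)=f(y_0)$, so $F$ genuinely extends $f$.

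Next I would verify that $F$ is $1$-Lipschitz on all of $X$. Fix $x_1,x_2\in X$ and let $y^\ast\in Y$ attain the minimum defining $F(x_2)$. Then by the triangle inequality
$$F(x_1)\le f(y^\ast)+\rho(x_1,y^\ast)\le f(y^\ast)+\rho(x_2,y^\ast)+\rho(x_1,x_2)=F(x_2)+\rho(x_1,x_2).$$
Interchanging the roles of $x_1$ and $x_2$ gives the reverse inequality, so $|F(x_1)-F(x_2)|\le\rho(x_1,x_2)$, as required.

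Since the entire argument is a short computation with the triangle inequality, I do not expect a genuine obstacle; the only point where the hypothesis on $f$ is actually used is the agreement $F|_Y=f$, which is precisely where the $1$-Lipschitz bound on $f$ over $Y$ enters. In an infinite metric space one would replace the minimum by an infimum and separately check that it is finite (bounded below by $f(y_0)-\rho(x,y_0)$ for any fixed $y_0\in Y$), but the finiteness of $X$ makes even that step automatic.
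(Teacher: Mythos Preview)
Your proof is correct and is essentially the same classical McShane extension as in the paper: the paper adds one point at a time and uses the dual formula $f(x_0)=\max_{y\in Y}\bigl(f(y)-\rho(y,x_0)\bigr)$, while you write the global minimum formula $F(x)=\min_{y\in Y}\bigl(f(y)+\rho(x,y)\bigr)$ directly. The verification in both cases reduces to the same application of the triangle inequality, so the approaches coincide up to this cosmetic choice.
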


\begin{proof} It suffices to consider the case $|X\setminus Y|=1$, $X=Y\cup\{x_0\}$ (and then use induction).
Define $f(x_0)$ as $f(x_0)=\max_{y\in Y} (f(y)-\rho(y,x_0))$. Let this maximum achieve in $z\in Y$. By definition, $f$
satisfies $f(x_0)-f(y)\geq -\rho(y,x_0)$ for any $y\in Y$. So, to check that $f$ is 1-Lipschitz on $X$, we need to check
that $f(x_0)-f(y)\leq \rho(y,x_0)$ for any $y\in Y$. We have $f(x_0)-f(y)=f(z)-f(y)-\rho(z,x_0)\leq \rho(z,y)-\rho(z,x_0)\leq \rho(y,x_0)$
and we are done.
\end{proof}

So, if some graphs of 1-Lipschits functions of $Y\subset X$ cover complete graph on $Y$, we may extend these functions to $X$
so that they are still 1-Lipschitz and additionally consider the functions $\rho(z,\cdot)$ for all $z\in X\setminus Y$.
The graphs of these functions cover all the edges having at least one endpoint not in $Y$. Therefore we get $m(X)\leq m(Y)+|X\setminus Y|$.
hence if we find a subset $Y\subset X$ such that $m(Y)\leq |Y|-c$, then $m(X)\leq |X|-c=n-c$ aswell.


Let $T$ be a tree on $X$. Orient its edges so that there is no path of length 2 $a\rightarrow b\rightarrow c$.
There are two ways to do it, choose any. Then define a function on $X$ so that $f(a)-f(b)=\rho(a,b)$ for any edge $a\rightarrow b$
of our graph. It may be done uniquely to adding a constant function. Again, choose any.
We need the following straightforward

\begin{lem} If $T$ is a tree such that any two vertices are joined by a path of at most 4 edges,
then such a function is 1-Lipschitz iff for any path $a-b-c-d$ in a (non-oriented) tree $T$ we have
$\rho(a,d)+\rho(b,c)\geq \rho(a,b)+\rho(c,d)$.
\end{lem}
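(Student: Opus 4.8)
The plan is to use that, since $T$ is connected, the condition $f(a)-f(b)=\rho(a,b)$ on each oriented edge $a\to b$ determines $f$ up to an additive constant (as the statement already records), so the only content of the lemma is to decide for which metrics this $f$ satisfies $|f(x)-f(y)|\le\rho(x,y)$ for \emph{every} pair $x,y\in X$, not merely for tree edges. I would organize the verification according to the tree-distance between $x$ and $y$ (the number of edges on the connecting $T$-path), exploiting that the chosen orientation has no directed path of length two: every vertex is a pure source or a pure sink, and so the source/sink labels alternate along any path of $T$.

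First, the short distances. If $x,y$ are joined by an edge, then $|f(x)-f(y)|=\rho(x,y)$ by construction. If they are at distance $2$, say $x-w-y$, then both edges at $w$ point the same way relative to $w$ (since $w$ is a pure source or sink), whence $f(x)-f(y)=\pm(\rho(w,x)-\rho(w,y))$ and $|f(x)-f(y)|=|\rho(w,x)-\rho(w,y)|\le\rho(x,y)$ by the triangle inequality; so distance-$2$ pairs are automatically fine.

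The heart of the equivalence is the distance-$3$ case. For a path $a-b-c-d$ the alternation of labels forces, up to the global sign coming from whether $a$ is a source or a sink,
\[
f(a)-f(d)=\rho(a,b)-\rho(b,c)+\rho(c,d).
\]
Thus $|f(a)-f(d)|\le\rho(a,d)$ splits into $\rho(a,b)+\rho(c,d)-\rho(b,c)\le\rho(a,d)$, which is exactly the asserted inequality $\rho(a,d)+\rho(b,c)\ge\rho(a,b)+\rho(c,d)$, and $\rho(b,c)-\rho(a,b)-\rho(c,d)\le\rho(a,d)$, which follows from the triangle inequality alone. This simultaneously gives the forward direction (apply the $1$-Lipschitz property to the endpoints of every $3$-edge path) and shows that for distance-$3$ pairs the stated condition is precisely what is needed.

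Finally I must control distance-$4$ pairs, and here the hypothesis that any two vertices are joined by a path of at most $4$ edges does the essential work: it rules out pairs at tree-distance $5$ or more, which could not be governed by $3$-edge-path conditions. For a path $a-b-c-d-e$ one computes $f(a)-f(e)=\rho(a,b)-\rho(b,c)+\rho(c,d)-\rho(d,e)$; the bound $f(a)-f(e)\le\rho(a,e)$ follows from the distance-$3$ inequality on the sub-path $a-b-c-d$ (which gives $f(a)-f(d)\le\rho(a,d)$) together with $\rho(a,d)-\rho(d,e)\le\rho(a,e)$, and the reverse bound follows symmetrically from the sub-path $b-c-d-e$ and $\rho(b,e)-\rho(a,b)\le\rho(a,e)$. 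The main obstacle is not the algebra but keeping the orientation bookkeeping straight and recognizing the clean dichotomy: the triangle inequality settles distances $1$ and $2$, the hypothesis \emph{is} the distance-$3$ condition, distance $4$ reduces to two distance-$3$ conditions via the triangle inequality, and the diameter bound eliminates everything longer.
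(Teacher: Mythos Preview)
Your proof is correct and follows essentially the same approach as the paper: a case analysis on the tree-distance between $x$ and $y$, with distances $1$ and $2$ handled by the triangle inequality, distance $3$ identified with the stated condition, and distance $4$ reduced to a distance-$3$ instance plus a triangle inequality. You are slightly more explicit than the paper about the source/sink alternation and about the necessity direction of the ``iff'', but the argument is the same.
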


\begin{proof}
Consider any $x,y\in X$ and check whether $|f(x)-f(y)|\leq \rho(x,y)$.
If $x$ and $y$ are joined by edge, clearly the equality holds. If they are joined by the path
$x-z-y$, then $|f(x)-f(y)|=|\rho(x,z)-\rho(z,y)|\leq \rho(x,y)$, which is still ok.
If they are joined by the path $x-z-w-y$, then $|f(x)-f(y)|=|\rho(x,z)-\rho(z,w)+\rho(w,y)|$.
Inequality $\rho(x,z)-\rho(z,w)+\rho(w,y)\geq -\rho(x,y)$ always holds and follows from triangle
inequality. Inequality $\rho(x,z)-\rho(z,w)+\rho(w,y)\leq \rho(x,y)$ is exactly the condition which we require
in Lemma. Finally, if $x$ and $y$ are joined by the path $x-z-t-w-y$, then
$f(x)-f(y)=\pm(\rho(x,z)-\rho(z,t)+\rho(t,w)-\rho(w,y))$. For checking, say,
$\rho(x,z)-\rho(z,t)+\rho(t,w)-\rho(w,y)\leq \rho(x,y)$ use $\rho(x,z)-\rho(z,t)+\rho(t,w)\leq \rho(x,w)$, which is
again the requirement of Lemma.
\end{proof}

Analogous statement holds for all trees, but here we need only trees of diameter at most 4.

Now we are ready for applying Ramsey theorem.

It is convenient to think that the points
of our space are reals $x_1<x_2<\dots<x_n$. For any indexes $1\leq a<b<c<d\leq n$ consider the four points
$x_a$, $x_b$, $x_c$, $x_d$ and the following sums: $R_1=\rho(x_a,x_b)+\rho(x_c,x_d)$, $R_2=\rho(x_a,x_c)+\rho(x_b,x_d)$,
$R_3=\rho(x_a,x_d)+\rho(x_b,x_c)$. These sums are different reals (since $X$ is generic), and there are six
possible ways to rearrange them. So, take six colours labelled by elements of the symmetric group $S_3$ and colour
the quadruple $(a,b,c,d)$ in depend on the arrangement of $R_1$, $R_2$, $R_3$ (for example, colour $(a,b,c,d)$ in color
231, if $R_2>R_3>R_1$).

Fix positive integer $k$ and note that if $n$ is large enough, then by Ramsey theorem there exist $k$ indexes such that all quadruples
formed by these indexes have the same colour. We call the corresponding subspace of $X$ monochromatic of corresponding
colour.

So, it suffices to consider only monochromatic spaces. Call our space $(X,\rho)$, $|X|=n$ again, it is now generic and monochromatic.

We have six cases, which correspond to six permutations of 1,2,3. The first thing which we note is that two of them are impossible
if $n\ge 5$.

\begin{lem} There are no monochromatic generic metric spaces on 5 points of colours 213 or 312.
\end{lem}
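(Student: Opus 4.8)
The plan is to show that colour $213$ is self-contradictory already at the level of the six pairwise sums, with no appeal to the triangle inequality. Recall that colour $213$ means that for every quadruple of indices $a<b<c<d$ one has $R_2>R_1>R_3$; thus each quadruple supplies two strict inequalities, $R_2>R_1$ and $R_1>R_3$. (The second of these, $\rho(x_a,x_d)+\rho(x_b,x_c)<\rho(x_a,x_b)+\rho(x_c,x_d)$, is exactly the failure of the path condition of the previous lemma for the monotone path $a-b-c-d$, which is why this colour is suspect.) I abbreviate $\rho_{ij}=\rho(x_i,x_j)$ and look for a small family of quadruples whose inequalities add up to an impossibility.

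The key step is the right choice of three quadruples among the five points. I take $(2,3,4,5)$ and $(1,2,3,4)$ and write down their $R_1>R_3$ inequalities,
\[
\rho_{23}+\rho_{45}>\rho_{25}+\rho_{34},\qquad \rho_{12}+\rho_{34}>\rho_{14}+\rho_{23}.
\]
Adding them and cancelling the two terms $\rho_{23}$ and $\rho_{34}$ that occur on both sides yields $\rho_{12}+\rho_{45}>\rho_{14}+\rho_{25}$. On the other hand, the $R_2>R_1$ inequality for the quadruple $(1,2,4,5)$ reads $\rho_{14}+\rho_{25}>\rho_{12}+\rho_{45}$, the exact reverse. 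This contradiction settles colour $213$.

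For colour $312$ one has $R_3>R_1>R_2$, so each quadruple gives $R_3>R_1$ and $R_1>R_2$; running the same three quadruples with all inequalities reversed gives $\rho_{14}+\rho_{25}>\rho_{12}+\rho_{45}$ from $(2,3,4,5)$ and $(1,2,3,4)$, against $\rho_{12}+\rho_{45}>\rho_{14}+\rho_{25}$ from $(1,2,4,5)$, the same contradiction. The three quadruples $(2,3,4,5)$, $(1,2,3,4)$, $(1,2,4,5)$ together involve all five points, which is consistent with the fact that an isolated quadruple may legitimately carry colour $213$: the obstruction is global. The only genuine difficulty is spotting this triple and the cancellation; once they are found, no further estimates (in particular no triangle inequalities) are required.
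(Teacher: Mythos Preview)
Your proof is correct and is essentially identical to the paper's own argument: the paper writes down precisely the same three inequalities---$R_1>R_3$ for the quadruples $(1,2,3,4)$ and $(2,3,4,5)$ together with $R_2>R_1$ for $(1,2,4,5)$---and sums all three at once to obtain $0>0$, whereas you add the first two and then confront the third. The treatment of colour $312$ by reversing all signs is likewise the same.
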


\begin{proof} Assume that $(X,\rho)$ is a generic metric space on 5 points, monochromatic of colour 213.
Let $x_1<x_2<x_3<x_4<x_5$ be the vertices of $X$. Then
\begin{align*}
\rho(x_1,x_2)+\rho(x_3,x_4)&>\rho(x_1,x_4)+\rho(x_2,x_3)\cr
\rho(x_2,x_3)+\rho(x_4,x_5)&>\rho(x_2,x_5)+\rho(x_3,x_4)\cr
\rho(x_1,x_4)+\rho(x_2,x_5)&>\rho(x_1,x_2)+\rho(x_4,x_5).
\end{align*}
Sum up to get a contradiction. The case of colour 312 is analagous, just change the sign in 3 above inequalities.

\end{proof}

Monochromatic spaces of other four colours and arbitrary cardinality do exist, so we have to consider
four cases. We use different approaches in all the four cases.

But before passing to separate cases, make the following general note about trees of diameter at most 4,
which are graphs of 1-Lipschitz function (call such trees \textsl{admissible}).
Each such tree has a \textsl{center}, some vertices, joined by edge with a center, which we call
\textit{main vertices}, and other, \textsl{peripheric}, vertices, each of them is joined with one of main vertices.

Note that if $Y\subset X$ and $T$ is admissible tree on $Y$, then $T$ may be extended to an admissible tree on $X$
with the same set of main vertices.
Indeed, it suffices to consider $X=Y\cup \{x_0\}$. Join $x_0$ with such a main vertex $a$  that $\rho(x_0,a)-\rho(a,o)$
takes minimal value ($o$ is center of $T$). Moreover, this expending is unique and so the admissible tree is uniquely determined
by its center and main vertices. Denote the admissible tree with the center $o$ and admissible vertices
$a_1,a_2,\dots,a_m$ as $T(o;a_1,a_2,\dots,a_m)$.

Note also that the covering a monochromatic space by graphs of 1-Lipschitz functions does depend only on the colour
of our monochromatic space and does not depend on metric structure in any other way. So, we
may forget about the metric structure and define a unique monochromatic structure of given colour on any
finite set of reals.

\textbf{Case 321.}

Let's prove that $k$ trees are enough to cover 321-monochromatic space $X$ with $2k$ vertices.
Set $X=\{-k,-(k-1),\dots,-1,1,\dots,k\}$. For any $i=1,2,\dots,k$ consider the tree
$T(-i;-k,-(k-1),\dots,-i-1,1,2,\dots,i)=T(i;k,(k-1),\dots,i+1,-1,-2,\dots,-i)$. It's straightforward to check that these trees
are admissible and cover $X$. Note that they all have indeed diameter 3, not 4.

\textbf{Case 132.}

We use the following

\begin{lem} Let $Y$ be a 132-monochromatic space, $|Y|=n$ and $m$ admissible trees
cover the complete graph on $Y$ without some $k$ edges. Then we may add 2 vertices
(i.e. 2 reals) to $Y$ and for new space $X$ add 2 admissible trees so that
now the trees (2 added and extensions of $m$ old) cover complete graph on $X$ without $k-1$ edges.
\end{lem}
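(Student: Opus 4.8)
The plan is to exploit the canonical nature of the $132$-structure: once the order of the two new reals among the old points is fixed, the metric (and hence the admissibility of any tree) is determined, so I am free to place the new points $p,q$ and to build the two new trees by hand. First the book-keeping. Extending the $m$ old trees preserves every old edge they already cover, so the $k$ old uncovered edges stay uncovered unless a new tree covers one of them; therefore it suffices to cover every edge incident to $p$ or $q$ (there are $2n+1$ of them) together with exactly one previously uncovered old edge $\{u,v\}$. Two spanning trees of $X$ carry $2(n+1)$ edges, so there is just enough room, and the whole lemma will come down to realizing such a cover by \emph{admissible} trees, the admissibility being tested by the $4$-path criterion of the preceding Lemma.

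The construction I would try is this. Let $\{u,v\}$ be an uncovered edge with $u<v$. For one new tree take the full star $B$ centred at $q$; having no $4$-vertex paths it is automatically admissible and covers every edge at $q$. For the second tree take $A$ centred at $p$ with main vertices $(Y\setminus\{u\})\cup\{q\}$, but with $u$ attached to the main vertex $v$ rather than to the centre $p$; then $A$ covers the old edge $\{u,v\}$ and every edge at $p$ except $\{p,u\}$. The decisive move is the placement: insert $p$ in the gap immediately below $v$ and $q$ above all old points. The only $4$-vertex paths of $A$ are then $u-v-p-w$ over the mains $w$, and since $p$ sits just under $v$ no old point lies between $p$ and $v$. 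Because the colour is $132$, i.e. $R_1>R_3>R_2$ (adjacent $>$ nested $>$ crossing), each of these paths satisfies $\rho(u,w)+\rho(v,p)\ge\rho(u,v)+\rho(p,w)$: when $w$ lies above $v$ or $w=q$ the outer pair $\{u,v\},\{p,w\}$ is crossing and the condition is the genuine inequality $R_3>R_2$; when $w$ lies below $u$ it is again crossing; and when $w$ lies between $u$ and $p$ the outer pair is nested while the diagonal pair $\{u,w\},\{v,p\}$ is adjacent, so the condition reads $R_1>R_3$, which holds. Thus $A$ is admissible unconditionally.

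With these two trees the complete graph on $X$ is covered except for the single new edge $\{p,u\}$ together with the $k-1$ surviving old edges. To finish, I would have an extension of some old tree swallow $\{p,u\}$: every extension attaches $p$ to one main of the old tree it came from, so it is enough that $u$ be a main of some old tree and that $p$ attach there. I expect this last step to be the main obstacle, and indeed the heart of the lemma. The placement that makes $A$ admissible pins $p$ just below $v$, whereas steering $p$ onto $u$ in an extension wants $p$ adjacent to $u$; the two demands are automatically compatible precisely when $u$ and $v$ are consecutive reals, in which case the gap below $v$ is also the gap above $u$. So the real work is to spend the available freedom — the choice among the $k$ uncovered edges, the reflection symmetry of the colour (it is invariant under reversing the line), and the option of hanging $v$ on $u$ instead of $u$ on $v$ — to arrange that the targeted uncovered edge is such a ``gap'' edge with the relevant endpoint a main of some old tree. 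If that cannot be secured directly, the alternative is to retain the exact two-tree decomposition with $A\cup B=\{\text{new edges}\}\cup\{u,v\}$ and instead confront the admissibility of $B$ along its paths $p-u-q-w$, which is the very same difficulty seen from the other side, and which the colour-$132$ rigidity makes genuinely tight.
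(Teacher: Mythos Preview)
Your construction has a genuine gap, and it is exactly the one you flag yourself: the edge $\{p,u\}$ is not covered by $A$ or $B$, so you end with $k$ uncovered edges, not $k-1$. Your proposed fix --- hoping an extension of some old tree swallows $\{p,u\}$ --- cannot work in general, because the lemma is applied starting from $m=0$ (no old trees at all), and at that first step there is nothing to extend. Even at later steps there is no mechanism guaranteeing that $u$ is a main vertex of some old tree, let alone that $p$ attaches to it. The ``alternative'' you sketch at the end, hanging $p$ under $u$ in $B$, runs into the admissibility failure you anticipate: for mains $w<u$ the path $p-u-q-w$ requires $R_2\ge R_3$, which is false in colour $132$.

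The paper's argument avoids this dead end by a different placement and, crucially, by using trees with only \emph{two} main vertices rather than near-stars. Given an uncovered edge $a<b$, it inserts the two new points \emph{inside} the gap, at $a+\eps$ and $b-\eps$, and takes the trees $T(a;\,a+\eps,\,b)$ and $T(b;\,a+\eps,\,b-\eps)$. The $132$ inequalities then force every peripheric vertex of the first tree to attach to $a+\eps$, and every peripheric vertex of the second tree to attach to $b-\eps$; so the first tree is a broom covering $\{a,b\}$ together with every edge at $a+\eps$ except $\{a+\eps,b\}$, and the second is a broom covering $\{a+\eps,b\}$ together with every edge at $b-\eps$. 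All new edges and the targeted old edge are thus covered outright, with no reliance on the old trees. The admissibility checks reduce to the single $3$-path in each broom and go through by $R_1>R_3>R_2$ in every position of the fourth point.
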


In other words, we add two additional vertices and two additional trees, but kill
one edge.

\begin{proof} Assume that the edge $a-b$, $a<b$, is not covered by our trees. Add two vertices
$a+\eps$, $b-\eps$ and two trees $T(a;a+\eps,b)$ and $T(b;a+\eps,b-\eps)$. The check is straightforward.
\end{proof}

Then, using this Lemma, we start with $|Y|=c$, $m=0$ and $k=c(c-1)/2$. Apply the Lemma $k$ times
and get $X$ such that $|X|=c+2k$ and $X$ is covered by $2k$ trees, as desired.

\textbf{Case 123.}

We prove that for given integer $c>0$ there exists large $N$ such that the 123-monochromatic
space on $N$ vertices may be covered by $N-c$ admissible trees. It is exactly what we need.

Use induction on $c$. For $c=0$ take $N=1$.

Now we show that if $N$ is good for $c$, then $2N+3$ is good for $c+1$.
Take vertices $-1,0,1,\dots,2N+1$. Take the trees
$T(0;-1,2)$, $T(1;0,2)$ and $T(i+N+1;i,i+1)$ for $i=1,\,2,\,\dots,N$.
So, we use $N+2$ trees and it is straightforward that they cover all edges
having at least one endpoint in $\{-1,0,1,\dots,N+1\}$. All the other edges may be covered by
$N-c$ trees by induction proposition. So, we use $2N+2-c=(2N+3)-(c+1)$ trees to cover all the edges.

\textbf{Case 231}

We take a 231-monochromatic space on $4n+1$ vertices $\{0,1,\dots,4n\}$ and cover it by $3n$ trees.
It is enough for our purpose for $n>c$.

For $i=1,2,\dots,2n$ take trees $T(0;i,4n+1-i)$. They cover all the edges except edges $a-b$
for $1\leq a\leq 2n<b\leq 4n+1-a$. So, we have to cover the bipartite graph (with $n$ vertices in each part) formed by such edges
by $n$ trees. Note that the tree $T(1;2n+1,2n+2,\dots,4n)$ covers all its edge from
vertices $1,2n,2n+1,4n$. After taking this tree we have analogous graph with $n-1$ vertices in each part
by $n-1$ trees. This is made by repeating this procedure (or by induction).

So, all the cases are considered and the Theorem 1 is proved.

\begin{rem} Another natural question is to study the minimal $k(n)$ such that any metric space on $n$
point may be embedded in some $k$-dimensional Banach space, not necessary in $l_{\infty}^k$. Clearly,
$k(n)\leq m(n)$, and we do not know, whether the equality always holds or not. The probably
best known lower bound for $k(n)$ is $2n/3$ \cite{AD}.
\end{rem}


\begin{thebibliography}{5}




\bibitem{A} \textsl{N. Alon.}
Eigenvalues, geometric expanders, sorting in rounds, and Ramsey theory.
Combinatorica \textbf{6} (1986), no. 3, 207--219.

\bibitem{AD} \textsl{G. Averkov, N. D\"uvelmeyer.}
Embedding metric spaces into normed spaces and estimates of metric capacity.
Monatsh. Math. \textbf{152} (2007), no. 3, 197--206.

\bibitem{B} \textsl{K. Ball}
Isometric embedding in $l\sb p$-spaces.
European J. Combin. \textbf{11} (1990), no. 4, 305--311.

\bibitem{S} \textsl{J. Spencer.}
Asymptotic lower bounds for Ramsey functions.
Discrete Math. \textbf{20} (1977/78), no. 1, 69--76.

\bibitem{W} \textsl{D. Wolfe.}
Imbedding a finite metric set in an $N$-dimensional Minkowski space.
Nederl. Akad. Wetensch. Proc. Ser. A \textbf{70}  (1967), 136--140.

\end{thebibliography}
\end{document}